%%%%%%%%%%%%%%%%%%%%%%%%%%%%%%%%%%%%%%%%%%%%%%%%%%%%%%%%%%%%%%%%%%%%%%%%%%%%
% Counting conjugacy classes in the unipotent radical of parabolic
% subgroups of $\GL_n(q)$
% Goodwin--Roehrle
% version 11/06/2009 (SG)
%%%%%%%%%%%%%%%%%%%%%%%%%%%%%%%%%%%%%%%%%%%%%%%%%%%%%%%%%%%%%%%%%%%%%%%%%%%%
\documentclass[12pt]{amsart}

\newcommand{\bfG}{\mathbf G} %
\newcommand{\bfN}{\mathbf N} %
\newcommand{\bfP}{\mathbf P} %
\newcommand{\bfU}{\mathbf U} %

\renewcommand\d{\mathbf d}
\newcommand\ddim{\mathbf{dim}\:}

\renewcommand{\u}{\mathfrak u} %
\newcommand{\m}{\mathfrak m} %

\DeclareMathOperator\GL{GL} %
\DeclareMathOperator\M{M} %
\DeclareMathOperator\U{U} %
\DeclareMathOperator\Aut{Aut} %
\DeclareMathOperator\End{End} %
\DeclareMathOperator\Hom{Hom} %

\newcommand\ZZ{{\mathbb Z}}
\newcommand\FF{\mathbb F}

\newcommand\CA{\mathcal A}
\newcommand\CC{\mathcal C}
\newcommand\CM{\mathcal M}
\newcommand\CQ{\mathcal Q}
\newcommand\CR{\mathcal R}

\newcommand\sub{\subseteq}

\usepackage{amssymb}
\usepackage{fullpage}

\numberwithin{equation}{section}

\theoremstyle{plain}
\newtheorem{lem}[equation]{Lemma}
\newtheorem{thm}[equation]{Theorem}
\newtheorem{cor}[equation]{Corollary}
\newtheorem{prop}[equation]{Proposition}
\newtheorem{ques}[equation]{Question}

\theoremstyle{remark}
\newtheorem{rem}[equation]{Remark}

\title
{Counting conjugacy classes in the unipotent radical of parabolic
subgroups of $\GL_n(q)$}

%first author info
\author[S.~M.~Goodwin] {Simon M.~Goodwin}
\address{School of Mathematics,
University of Birmingham,
Birmingham, B15 2TT,
United Kingdom}
\email{goodwin@maths.bham.ac.uk}
\urladdr{http://web.mat.bham.ac.uk/S.M.Goodwin}

%second author info
\author[G.\ R\"ohrle]{Gerhard R\"ohrle}
\address%[G.~R\"{o}hrle]
{Fakult\"at f\"ur Mathematik, Ruhr-Universit\"at Bochum, D-44780
Bochum, Germany} \email{gerhard.roehrle@rub.de}
\urladdr{http://www.ruhr-uni-bochum.de/ffm
/Lehrstuehle/Lehrstuhl-VI/rubroehrle.html}

\thanks{2000 {\it Mathematics Subject Classification}.
Primary 20G40 Secondary 20E45, 20D15}

\begin{document}

\begin{abstract}
Let $q$ be a power of a prime $p$. Let $P$ be a parabolic subgroup of the
general linear group $\GL_n(q)$ that is the stabilizer of a flag in
$\FF_q^n$ of length at most $5$, and let $U = O_p(P)$.  In this note
we prove that, as a function of $q$, the number $k(U)$ of conjugacy
classes of $U$ is a polynomial in $q$ with integer coefficients.
\end{abstract}

\maketitle

\section{Introduction}
Let $\GL_n(q)$ be the finite general linear group defined over the
field $\FF_q$ of $q$ elements, where $q$ is a power of a prime $p$. A
longstanding conjecture attributed to G.~Higman (cf.\ \cite{higman})
asserts that the number of conjugacy classes of a Sylow $p$-subgroup
of $\GL_n(q)$ is given by a polynomial in $q$ with integer
coefficients. This has been verified by computer calculation by
A.~Vera-L\'opez and J.~M.~Arregi for $n \le 13$, see
\cite{veralopezarregi}.  There has been much interest in this
conjecture, for example from G.~R.~Robinson \cite{robinson} and
J.~Thompson \cite{thompson}.  The reader is referred to
\cite{alperin}, \cite{evseev}, \cite{goodwinroehrle:unipotent},
\cite{goodwinroehrle:parabolic}, \cite{goodwinroehrle:calcconj}, and
\cite{goodwinroehrle:unipotent2} for recent related results.

In this note we consider the following question, which is precisely
Higman's conjecture when $P = B$ is a Borel subgroup of $\GL_n(q)$.

\begin{ques}
\label{Q:number} Let $P$ be a parabolic subgroup of $\GL_n(q)$ and
let $U = O_p(P)$. As a function of $q$, is the number $k(U)$ of
conjugacy classes of $U$ a polynomial in $q$?
\end{ques}

Here we recall that $O_p(P)$ is by definition the largest normal 
$p$-subgroup of $P$.
In this paper, we give an affirmative answer to Question
\ref{Q:number} in the following cases.

\begin{thm}
\label{thm:main} Let $P$ be a parabolic subgroup of $\GL_n(q)$ that
is the stabilizer of a flag in $\FF_q^n$ of length at most $5$, and
let $U = O_p(P)$.  Then, as a function of $q$, the number $k(U)$ of
conjugacy classes of $U$ is a polynomial in $q$ with integer
coefficients.
\end{thm}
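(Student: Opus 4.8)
The plan is to translate the problem into an orbit-counting problem and then to show that the orbit count is a sum of $\FF_q$-point counts of explicit varieties, each polynomial in $q$. First I would identify $U$ with $1+\u$, where $\u=\bigoplus_{i<j}\M_{d_i\times d_j}(\FF_q)$ is the nilpotent algebra of block strictly upper triangular matrices attached to the composition $(d_1,\dots,d_r)$ of $n$ that is read off from the flag, so that $r\le 5$. Under this identification, conjugation of $u=1+A$ by $g=1+X$ is $gug^{-1}=1+(1+X)A(1+X)^{-1}$, and $k(U)$ is the number of orbits of $U$ acting on $\u$ in this way. Note the block sizes $d_1,\dots,d_r$ are retained throughout as parameters, so the target is not a finite check but a uniform statement over this infinite family.

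Next I would fix a central series $U=U_0\supseteq U_1\supseteq\cdots\supseteq U_m=1$, refined so that each quotient $U_j/U_{j+1}$ is central in $U/U_{j+1}$ and of order $q$; such a series is obtained by ordering the individual matrix entries of the blocks $A_{ij}$ compatibly with the bracket. The engine is the standard recursion for a central extension: if $Z\le Z(G)$ with $\bar G=G/Z$, then the preimage of a class of $\bar g\in\bar G$ is $(\text{class of }g)\cdot Z$, which splits into $[Z:S_g]$ classes of $G$, where $S_g=\{z\in Z: gz\text{ is }G\text{-conjugate to }g\}$ is a subgroup of $Z$. For our filtration the index $[Z:S_g]$ is a power of $q$ governed by the rank of the linear map $h\mapsto [g,h]\bmod U_{j+1}$, so iterating the recursion expresses $k(U)$ as a sum, over canonical representatives $A$, of contributions $q^{(\cdots)}$. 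This is precisely the mechanism used by Vera-L\'opez and Arregi in the Borel case and in the treatment of shorter flags; the exponents, and the loci of $A$ on which they are constant, are controlled by the ranks of the blocks $A_{ij}$ and of their products $A_{ik}A_{kj}$ arising from the quadratic part of conjugation.

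I would then carry out this computation for every composition into at most $5$ parts. The loci on which the commutator ranks are constant are cut out by rank (determinantal) conditions on the $A_{ij}$ and their products, and on each such stratum the relevant $\FF_q$-point count is a product of Gaussian binomial coefficients, hence a polynomial in $q$ with integer coefficients; since the per-stratum contributions already lie in $\ZZ[q]$ and are summed without any division, the integrality of the coefficients of $k(U)$ comes out for free. (As a cross-check on integrality one may use $k(U)=q^{-N}\,|\{(A,B)\in\u\times\u:AB=BA\}|$ with $N=\dim\u$, since $x=1+A$ and $y=1+B$ commute iff $AB=BA$; this recasts $k(U)$ as a commuting-variety count, whose stratification by $\dim\mathfrak{c}_\u(A)$ again reduces to determinantal point counts.)

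The main obstacle is the case $r=4$ and $r=5$. For $r\le 3$ the bracket couples at most the single ``long'' block $A_{13}$, so the strata are affine and the count is immediate; but for four and five blocks the long blocks $A_{13},A_{14},A_{15},A_{24},A_{25},A_{35}$ interact through the quadratic terms $A_{ik}A_{kj}$, so the loci that arise are genuine determinantal varieties subject to several coupled rank conditions. The crux is to verify that every such locus occurring for $r\le 5$ has polynomial $\FF_q$-point count, i.e. that none of the non-polynomial behaviour feared in Higman's conjecture actually intervenes at this length. I expect this verification, organized by choosing the ordering of the entries so as to minimize the coupling at each stage and thereby keep the determinantal conditions in standard (Gaussian-binomial) form, to constitute the bulk of the work.
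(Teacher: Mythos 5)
Your reduction of $k(U)$ to counting $U$-orbits on $\u = \u(\d)$ and the central-extension recursion of Vera-L\'opez--Arregi are both sound, and this is a genuinely different route from the paper's. But the proposal has a genuine gap, and it sits exactly where the whole difficulty of the theorem lies: you defer to a final ``verification'' the claim that every stratum (cut out by the coupled rank conditions on the blocks $A_{ij}$ and their products) has polynomial $\FF_q$-point count, and you give no mechanism for carrying this out. For the Borel subgroup this stratification analysis \emph{is} Higman's conjecture, which is open and has only been checked computationally for $n \le 13$; for flags of length $4$ or $5$ with arbitrary block sizes your plan is no easier, since nothing in it exploits the hypothesis $t \le 5$ --- ``ordering the entries so as to minimize the coupling'' is a heuristic, and there is no general theorem that varieties defined by several coupled rank conditions on matrix products have Gaussian-binomial point counts. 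Consequently the claimed ``for free'' integrality also evaporates: it is conditional on a stratification you have not produced.

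The reason the theorem is provable at all for $t \le 5$ is a deep representation-theoretic input that your outline never touches: by Hille--R\"ohrle (resting on Dlab--Ringel's result that $\CA_t$ has finite $\Delta$-representation type if and only if $t \le 5$), the parabolic $P = P(\d)$ has \emph{finitely many} orbits on $\u$, with a set $\CR$ of representatives given by $0$--$1$ matrices independent of $q$, and with centralizers of $q$-independent shape: $C_P(x) \cong \prod_{i=1}^m \GL_{n_i}(\FF_q) \ltimes N$ with $N$ split unipotent of $q$-independent dimension $\delta$, and $C_U(x)$ split unipotent of $q$-independent dimension $\delta'$. The paper then simply writes $k(U) = \sum_{x \in \CR} |P \cdot x|/|U \cdot x| = |L| \sum_{x \in \CR} |C_U(x)|/|C_P(x)|$, obtaining a rational function in $q$, hence (being integer-valued) a polynomial with rational coefficients, and finally gets integrality of the coefficients by applying the Grothendieck trace formula to the commuting variety $\CC(\bfU)$ --- which is essentially your cross-check, but used as an actual step rather than a consistency remark. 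To repair your argument you would need either to import this finiteness-and-parameterization theorem (at which point the recursion machinery becomes unnecessary), or to genuinely carry out the determinantal analysis for $r = 4,5$ uniformly in the block sizes, which is an open-ended computation that your outline does not reduce to known results.
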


We now explain the significance of the hypothesis imposed in Theorem
\ref{thm:main}.  Let $\bfP$ be a parabolic subgroup of $\GL_n(\bar
\FF_q)$ and let $\bfU$ be the unipotent radical of $\bfP$; where
$\bar \FF_q$ denotes the algebraic closure of $\FF_q$. All
instances when $\bfP$ acts on $\bfU$ with a finite number of orbits
were determined in \cite{hilleroehrle}; this is precisely the case
when $\bfP$ is the stabilizer of a flag in $\bar \FF_q^n$
of length at most $5$.  So Theorem \ref{thm:main} deals with
parabolic subgroups $P$ of $\GL_n(q)$ that correspond to parabolic
subgroups $\bfP$ of $\GL_n(\bar \FF_q)$ with a finite number of
conjugacy classes in $\bfU$.  In such cases, it is observed in
\cite[\S 4, Rem.\ 4.13]{hilleroehrle} that the parameterization of the
$\bfP$-conjugacy classes in $\bfU$ is independent of $q$: this is the
crucial point that we require for our proof of Theorem
\ref{thm:main}.

The proof of  Theorem \ref{thm:main} involves a translation of the
problem to a representation theoretic setting. More precisely, we
recall from \cite[\S 4]{hilleroehrle} that the $P$-conjugacy classes
in $U$ correspond bijectively to the so called $\Delta$-filtered
modules of a certain quasi-hereditary algebra $\CA_t$.  This allows
us to see that the parameterization of the $P$-orbits in $U$ is
independent of $q$ and that we can choose a set $\CR$ of
representatives that are matrices with entries equal to $0$ or $1$.
The other key point is that the structures of the centralizers
$C_P(x)$ and $C_U(x)$ for $x \in \CR$ do not depend on $q$; this is
covered in Propositions \ref{prop:C_P} and \ref{prop:C_U}.

\smallskip

We now discuss some natural generalizations of Theorem
\ref{thm:main}. First consider the case of a normal subgroup $\bfN$
of $\bfP$ with $\bfN \sub \bfU$. Still assuming that there is only a
finite number of $\bfP$-orbits in $\bfU$, we readily derive from the
proof of Theorem \ref{thm:main} that $k(U,N)$, the number of
$U$-conjugacy classes in $N = \bfN \cap U$, is given by a polynomial
in $q$ with integer coefficients.  It should also be possible to
prove that the number $k(U,N)$ is a polynomial in $q$ with just the
assumption that there are finitely many $\bfP$-orbits in $\bfN$. For
example, for $\bfN = \bfU^{(l)}$ the $l$th member of the descending
central series of $\bfU$, there is a classification of all instances
when $\bfP$ acts on $\bfU^{(l)}$ with a finite number of orbits, see
\cite{BH}.  In such situations a generalization of the proof of
Theorem \ref{thm:main} would require detailed knowledge of the
$\bfP$-conjugacy classes in $\bfN$.

\smallskip

Further, it is natural to consider the generalization of Question
\ref{Q:number}, where $\GL_n(q)$ is replaced by any finite reductive
group $G$; and also to consider the number $k(P,U)$ of $P$-conjugacy
classes in $U$ rather than $k(U)$. (In order to avoid degeneracies in the
Chevalley commutator relations, it is sensible to only consider
these generalizations when $q$ is a power of a good prime for $G$.)

At present there are no known examples, where $k(U)$ is not given by
a polynomial in $q$, and there are many cases not covered by Theorem
\ref{thm:main}, where $k(U)$ is given by a polynomial in $q$, see
for example \cite{goodwinroehrle:calcconj} and
\cite{veralopezarregi}. However, it is not necessarily the case that
$k(P,U)$ is a polynomial in $q$: indeed in \cite[Exmp.\ 4.6]{Gzeta},
it shown that in case $G$ is of type $G_2$, and $P = B$ is a Borel
subgroup of $G$, the number $k(B,U)$ is given by two different
polynomials depending on the residue of $q$ modulo 3.

Let $\bfP$ be a parabolic subgroup of a reductive algebraic group
$\bfG$ defined over $\FF_q$, and suppose that $\bfP$ has finitely
many conjugacy classes in $\bfU$; let $P$ and $U$ be the groups of
$\FF_q$-rational points of $\bfP$ and $\bfU$ respectively. Given the
discussion after Theorem \ref{thm:main} a natural generalization to
consider is whether the number $k(U)$ of conjugacy classes of $U$ is
a polynomial in $q$.  Our proof of Theorem \ref{thm:main} is
dependent on the detailed information about the $P$-conjugacy
classes in $U$. For this reason the argument does not adapt to the
case, where $G$ is any finite reductive group.  The main difficulty
is that is it is not clear whether the parameterization of
$\bfP$-orbits in $\bfU$ and the structure of centralizers depends on
the characteristic of the underlying ground field. Another problem
is that centralizers $C_\bfP(u)$ for $u \in U$ need not be
connected, so determining the $P$-classes in $U$ from the
$\bfP$-classes in $\bfU$ may be non-trivial.

\section{Translation to representation theory}
\label{sec:translation}

In this section we recall the relationship between adjoint orbits of
parabolic subgroups and modules for a certain quasi-hereditary
algebra that was established in \cite[\S 4]{hilleroehrle}.  This
relationship is central to our proof of Theorem \ref{thm:main}.  In
particular, it is crucial for Propositions \ref{prop:C_P} and
\ref{prop:C_U}, which describe the structure of certain
centralizers.  Throughout this section we work in generality over
any field, before specializing to finite fields for the proof of
Theorem \ref{thm:main} in Section \ref{sec:proof}.

\smallskip

Let $K$ be any field, and let $n, t \in \ZZ_{\ge 1}$.  Let $\d =
(d_1,\dots,d_t) \in \ZZ_{\ge 0}^t$ with $d_i \le d_{i+1}$ and  $d_t
= n$. We define the parabolic subgroup $P(\d) = P_K(\d)$ of
$\GL_n(K)$ to be the stabilizer of the flag $0 \subseteq K^{d_1}
\subseteq K^{d_2} \subseteq \ldots \subseteq K^{d_t}$ in $K^n$; any
parabolic subgroup of $\GL_n(K)$ is conjugate to $P(\d)$ for some
$\d$.  We write $U(\d) = U_K(\d) = \{u \in \GL_n(K) \mid (u-1)V_i
\subseteq V_{i-1} \text{ for each $i$}\}$ for the unipotent radical
of $P(\d)$, and $\u(\d) = \u_K(\d) = \{x \in \M_n(K) \mid xV_i
\subseteq V_{i-1} \text{ for each $i$}\}$ for the Lie algebra of
$U(\d)$. Then $P(\d)$ acts on $\u(\d)$ via the adjoint action: $g
\cdot x =  gxg^{-1}$ for $g \in P(\d)$ and $x \in \u(\d)$.  For $x
\in \u(\d)$, we write $P \cdot x$ for the adjoint $P$-orbit of $x$
and $C_P(x)$ for the centralizer of $x$ in $P$; we define $U \cdot
x$ and $C_U(x)$ analogously.

Though we are primarily interested in the conjugacy classes of
$U(\d)$ and the $P(\d)$-conjugacy classes in $U(\d)$, it is more
convenient to consider the adjoint $P(\d)$-orbits in $\u(\d)$.  The
map $x \mapsto 1 + x$ is a $P(\d)$-equivariant isomorphism between
$\u(\d)$ and $U(\d)$, which means that the adjoint $P(\d)$-orbits in
$\u(\d)$ are in bijective correspondence with the $P(\d)$-conjugacy
classes in $U(\d)$; this allows us to work with the adjoint orbits.

\smallskip

The quiver $\CQ_t$ is defined to have vertex set $\{1,\dots,t\}$;
there are arrows $\alpha_i : i \to i+1$ and $\beta_i : i+1 \to i$
for $i = 1,\dots,t-1$. Below, in Figure \ref{F:Q}, we give an
example of a quiver $\CQ_t$.

\begin{figure}[h!]
\begin{picture}(320,30)
\put(0,15){\circle*{5}} %
\put(0,5){\makebox(0,0){\small 1}} %
\put(80,15){\circle*{5}} %
\put(80,5){\makebox(0,0){\small 2}} %
\put(160,15){\circle*{5}} %
\put(160,5){\makebox(0,0){\small 3}} %
\put(240,15){\circle*{5}} %
\put(240,5){\makebox(0,0){\small 4}} %
\put(320,15){\circle*{5}} %
\put(320,5){\makebox(0,0){\small 5}} %
\put(5,10){\vector(1,0){70}} %
\put(40,0){\makebox(0,0){\small $\alpha_1$}} %
\put(85,10){\vector(1,0){70}} %
\put(120,0){\makebox(0,0){\small $\alpha_2$}} %
\put(165,10){\vector(1,0){70}} %
\put(200,0){\makebox(0,0){\small $\alpha_3$}} %
\put(245,10){\vector(1,0){70}} %
\put(280,0){\makebox(0,0){\small $\alpha_4$}} %
\put(75,20){\vector(-1,0){70}} %
\put(40,30){\makebox(0,0){\small $\beta_1$}} %
\put(155,20){\vector(-1,0){70}} %
\put(120,30){\makebox(0,0){\small $\beta_2$}} %
\put(235,20){\vector(-1,0){70}} %
\put(200,30){\makebox(0,0){\small $\beta_3$}} %
\put(315,20){\vector(-1,0){70}} %
\put(280,30){\makebox(0,0){\small $\beta_4$}} %
\end{picture}
\caption{The quiver $\CQ_5$} \label{F:Q}
\end{figure}

\noindent Let $I_t = I_{t,K}$ be the ideal of the path algebra
$K\CQ_t$ of $\CQ_t$ generated by the relations:
\begin{equation}
\label{e:relns} \beta_1 \alpha_1 =0 \text{ and }  \alpha_i \beta_i=
\beta_{i+1} \alpha_{i+1} \text{ for } i = 1,\dots,t-2.
\end{equation}
The algebra $\CA_t = \CA_{t,K}$ is defined to be the quotient
$K\CQ_t/I_t$.

Recall that an $\CA_t$-module $M$ is determined by a family of
vector spaces $M(i)$ over $K$ for $i = 1,\dots,t$ such that $M =
\bigoplus_{i=1}^t M(i)$, and linear maps $M(\alpha_i) : M(i) \to
M(i+1)$ and $M(\beta_i) : M(i+1) \to M(i)$ for $i = 1,\dots,t-1$
that satisfy the relations \eqref{e:relns}. The dimension vector
$\ddim M \in \ZZ_{\ge 0}^t$ of an $\CA_t$-module is defined by
$\ddim M = (\dim M(1),\dots,\dim M(t))$.

\smallskip

Let $\CM_t = \CM_{t,K}$ be the category of $\CA_t$-modules $M$ such
that $M(\alpha_i)$ is injective for all $i$.  Write $\CM_t(\d) =
\CM_{t,K}(\d)$ for the class of modules in $\CM_t$ with dimension
vector $\d$. It is shown in \cite[\S 4]{hilleroehrle} that the
orbits of $P(\d)$ in $\u(\d)$ are in bijective correspondence
with the isoclasses in $\CM_t(\d)$. Moreover, there is a unique
structure of a quasi-hereditary algebra on $\CA_t$ such that $\CM_t$
is the category of \emph{$\Delta$-filtered} $\CA_t$-modules, see \cite[\S
4]{hilleroehrle} and \cite[\S 6--7]{dlabringel}.

Suppose for this paragraph that $K$ is infinite.  Using the above
correspondence from \cite[\S 4]{hilleroehrle} and the results from
\cite{dlabringel}, it was proved in \cite[Thm.\ 4.1]{hilleroehrle}
that there is a finite number of $P(\d)$-orbits in $\u(\d)$ if and
only if $t \le 5$. More precisely, this is deduced from the fact
that $\CA_t$ has finite $\Delta$-representation type if and only if
$t \le 5$, see \cite[Prop.\ 7.2]{dlabringel}.

Let $t \le 5$.  Because the results in \cite[\S 4]{hilleroehrle}
are proved for an arbitrary field, see \cite[Rem.\ 4.13]{hilleroehrle},
the parametrization of indecomposable
$\Delta$-filtered $\CA_t$-modules does not depend on the field $K$;
we explain this more explicitly below. Let $\{I_1,\dots,I_m\}$ be a
complete set of representatives of isoclasses of indecomposable
$\Delta$-filtered $\CA_t$-modules, and write $\d_i$ for the
dimension vector of $I_i$. Let $x_i \in \u(\d_i)$ be such that the
$P(\d_i)$-orbit of $x_i$ corresponds to the isoclass of $I_i$. As
discussed in \cite[\S 7]{hilleroehrle}, see also
\cite[Fig.\ 10]{BHRZ}, one can choose $x_i$ to be a matrix with
entries $0$ and $1$; and these matrices do not depend on $K$. In
particular, this implies that the modules $I_i$ are absolutely
indecomposable.

Another important consequence for us is the following lemma.

\begin{lem} \label{L:reps}
Assume $t \le 5$.  We may choose a set $\CR$ of representatives of
the adjoint $P(\d)$-orbits in $\u(\d)$ such that each element of $\CR$ is a
matrix with all entries equal to $0$ or $1$.  Moreover, the elements
of $\CR$ do not depend on the field $K$, i.e.\ the positions of
entries equal to $1$ do not depend on $K$.
\end{lem}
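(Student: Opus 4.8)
The plan is to reduce an arbitrary $\Delta$-filtered module to a direct sum of the indecomposable representatives by means of the Krull--Schmidt theorem, and then to realise each such direct sum explicitly as a matrix in $\u(\d)$ with all entries in $\{0,1\}$. Since $t \le 5$, the algebra $\CA_t$ has finite $\Delta$-representation type, so that $I_1,\dots,I_m$ is a complete list of the indecomposable modules in $\CM_t$. The category $\CM_t$ is closed under direct summands, because a direct sum of linear maps is injective precisely when each summand is injective; hence every module in $\CM_t$ is a direct sum of copies of the $I_i$, and by the Krull--Schmidt theorem this decomposition is unique up to isomorphism and reordering. Consequently the isoclasses in $\CM_t(\d)$ are in bijection with the tuples $(a_1,\dots,a_m) \in \ZZ_{\ge 0}^m$ satisfying $\sum_{i=1}^m a_i \d_i = \d$, where $\d_i = \ddim I_i$. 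Combining this with the bijection of \cite[\S 4]{hilleroehrle} between the isoclasses in $\CM_t(\d)$ and the adjoint $P(\d)$-orbits in $\u(\d)$, I obtain a parametrisation of these orbits by such tuples; since the $I_i$ and their dimension vectors $\d_i$ are independent of $K$, so is this parametrisation.

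Next I would construct the representatives. For each $i$ fix the $0$-$1$ matrix $x_i \in \u(\d_i)$ representing the orbit of $I_i$, as supplied by the discussion preceding the lemma. Given an admissible tuple $(a_i)$, form the block-diagonal matrix $y = \bigoplus_{i=1}^m x_i^{\oplus a_i}$ acting on $K^n$, where $n = d_t = \sum_i a_i (\d_i)_t$; this $y$ is again a $0$-$1$ matrix. It respects the flag obtained by concatenating the flags of the individual blocks, but not the flag defining $\u(\d)$. To correct this, reorder the standard basis of $K^n$ so that the basis vectors belonging to the $j$th layer of the flag, collected across all the blocks, appear together and in increasing order of $j$; this reordering is realised by conjugation by a permutation matrix $\sigma$. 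Since each $x_i$ maps the $j$th flag space of its block into the $(j-1)$st, the matrix $\sigma y \sigma^{-1}$ maps the $j$th space of the flag of dimension type $\d$ into the $(j-1)$st, so that $\sigma y \sigma^{-1} \in \u(\d)$, and it represents the orbit attached to $\bigoplus_i I_i^{\oplus a_i}$.

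Because conjugation by $\sigma$ merely permutes rows and columns, the matrix $\sigma y \sigma^{-1}$ has all entries in $\{0,1\}$. Taking $\CR$ to consist of these matrices, one for each admissible tuple $(a_i)$, gives the desired set of representatives, and the positions of its nonzero entries depend only on the $x_i$, the $\d_i$ and the multiplicities $a_i$, hence not on $K$. The main point requiring care is the additivity of the correspondence of \cite[\S 4]{hilleroehrle}: one must check that the block-diagonal matrix $y$, once transported by $\sigma$ into $\u(\d)$, genuinely represents the direct sum $\bigoplus_i I_i^{\oplus a_i}$. Once this is in place, the remainder reduces to straightforward bookkeeping with permutation matrices.
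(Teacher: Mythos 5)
Your proof is correct and follows essentially the same route as the paper: the paper presents the lemma as a direct consequence of the preceding discussion, namely that the indecomposable $\Delta$-filtered $\CA_t$-modules $I_1,\dots,I_m$ admit field-independent $0$-$1$ matrix representatives $x_i$ (citing \cite[\S 7]{hilleroehrle} and \cite[Fig.\ 10]{BHRZ}), with the passage to arbitrary orbits via Krull--Schmidt and block sums left implicit. You have simply made that implicit step explicit, including the permutation bookkeeping needed to fit the block-diagonal sum into $\u(\d)$.
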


We continue to assume that $t \le 5$, and let $\d \in \ZZ_{\ge
0}^t$. Let $P = P(\d)$, $U = U(\d)$ and $x \in \u = \u(\d)$. For the
proof of Theorem \ref{thm:main} we require information about the
structure of the centralizers $C_P(x)$ and $C_U(x)$, which is given
by Propositions \ref{prop:C_P} and \ref{prop:C_U}.

Let $M$ be a $\Delta$-filtered $\CA_t$-module (with dimension vector
$\d$) whose isoclass corresponds to the $P$-orbit of $x$. Extending
the arguments of \cite[\S 4]{hilleroehrle}, one can show that the
automorphism group $\Aut_{\CA_t}(M)$ of $M$ is isomorphic to
$C_P(x)$. Below we explain the structure of $\End_{\CA_t}(M)$ and
$\Aut_{\CA_t}(M)$, this uses standard arguments that we outline here
for convenience.  We proceed to explain how $C_U(x)$ is related to
$\End_{\CA_t}(M)$.

As above, let $\{I_1,\dots,I_m\}$ be a complete set of
representatives of isoclasses of indecomposable $\Delta$-filtered
$\CA_t$-modules. We may decompose $M$ as a direct sum of
indecomposable modules
\begin{equation} \label{e:decomp}
M \cong \bigoplus_{i=1}^m n_i I_i,
\end{equation}
where $n_i \in \ZZ_{\ge 0}$. Then
\[
\End_{\CA_t}(M) \cong \bigoplus_{i,j=1}^m n_i n_j
\Hom_{\CA_t}(I_i,I_j)
\]
as a vector space and  composition is defined in the obvious way.

We observed above that $I_i$ is absolutely indecomposable, which
means that $\End_{\CA_t}(I_i)$ is a local ring, and that we have the
decomposition $\End_{\CA_t}(I_i) = K \oplus \m_i$, where $K$ is
acting by scalars and $\m_i$ is the maximal ideal. Therefore,
\[
n_i^2 \End_{\CA_t}(I_i) \cong \M_{n_i}(K) \oplus \M_{n_i}(\m_i),
\]
where $M_{n_i}(K)$ is a subalgebra and $M_{n_i}(\m_i)$ is an ideal.
In fact, we have that $\M_{n_i}(\m_i)$ is the Jacobson radical of
$n_i^2 \End_{\CA_t}(I_i)$.

Now one can see that the Jacobson radical of $\End_{\CA_t}(M)$ is
\[
J(\End_{\CA_t}(M)) \cong \bigoplus_{i=1}^m \M_{n_i}(\m_i) \oplus
\bigoplus_{i \ne j} n_i n_j \Hom_{\CA_t}(I_i,I_j).
\]
Further, there is a complement to $J(\End_{\CA_t}(M))$ in
$\End_{\CA_t}(M)$ denoted by $C(\End_{\CA_t}(M))$ with
\[
C(\End_{\CA_t}(M)) \cong \bigoplus_{i=1}^m \M_{n_i}(K).
\]
We are now in a position to describe the automorphism group
$\Aut_{\CA_t}(M)$.  We have
\[
\Aut_{\CA_t}(M) \cong U(C(\End_{\CA_t}(M))) \ltimes (1_M +
J(\End_{\CA_t}(M))),
\]
where $U(C(\End_{\CA_t}(M)))$ denotes the group of units of
$C(\End_{\CA_t}(M))$ and $1_M + J(\End_{\CA_t}(M))$ is the unipotent
group $\{1_M + \phi \mid \phi \in J(\End_{\CA_t}(M))\}$. We have
$U(C(\End_{\CA_t}(M))) \cong \prod_{i=1}^m \GL_{n_i}(K)$, and
therefore
\[
\Aut_{\CA_t}(M) \cong \prod_{i=1}^m \GL_{n_i}(K) \ltimes N,
\]
where $N$ is a split unipotent group over $K$.  By saying $N$ is a
{\em split unipotent group} we mean that $N$ has a normal series
with all quotients isomorphic to the additive group $K$.  The
dimension of $N$ is
\begin{equation} \label{e:deltaP}
\delta := \sum_{i=1}^m n_i^2 (\dim \End_{\CA_t}(I_i) - 1) + \sum_{i\ne j}
n_i n_j \dim \Hom_{\CA_t}(I_i,I_j).
\end{equation}
One can compute all $\Hom$-groups $\Hom_{\CA_t}(I_i,I_j)$ from the
underlying Auslander-Reiten quivers of $\CA_t$ in
\cite[p221--222]{dlabringel}, see also \cite[App.\ A]{BHRZ}; the
dimensions $\dim \Hom_{\CA_t}(I_i,I_j)$ are independent of $K$.
Therefore, the positive integer $\delta$ is also independent of $K$.

We said above that $\Aut_{\CA_t}(M)$ of $M$ is isomorphic to
$C_P(x)$, so we have the following proposition.

\begin{prop} \label{prop:C_P}
The Levi decomposition of $C_P(x)$ is given by
\[
C_P(x) \cong \prod_{i=1}^m \GL_{n_i}(K) \ltimes N,
\]
where $N$, the unipotent radical of $C_P(x)$, is a split unipotent
group over $K$ of dimension $\delta$.
\end{prop}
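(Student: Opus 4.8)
The plan is to invoke the isomorphism $C_P(x) \cong \Aut_{\CA_t}(M)$ (where $M$ is the $\Delta$-filtered module corresponding to the $P$-orbit of $x$) and then read off the asserted Levi decomposition directly from the explicit structure of $\Aut_{\CA_t}(M)$ that we have already assembled above. Concretely, the preceding discussion established that
\[
\Aut_{\CA_t}(M) \cong \prod_{i=1}^m \GL_{n_i}(K) \ltimes N,
\]
with $N = 1_M + J(\End_{\CA_t}(M))$ a split unipotent group over $K$ of dimension $\delta$, where $\delta$ is given by \eqref{e:deltaP}. So the entire content of the proposition is to transport this semidirect product decomposition across the isomorphism $C_P(x) \cong \Aut_{\CA_t}(M)$ and to verify that the factor $\prod_{i=1}^m \GL_{n_i}(K)$ is a Levi factor while $N$ is genuinely the unipotent radical, rather than merely a normal unipotent subgroup complemented by a reductive group.

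First I would make precise the claim that $\Aut_{\CA_t}(M) \cong C_P(x)$, extending the arguments of \cite[\S 4]{hilleroehrle}: the correspondence there sends the $P$-orbit of $x$ to the isoclass of $M$, and one checks that the stabilizer in $P$ of the point $x$ matches the automorphisms of $M$ as algebraic groups over $K$, so that the isomorphism is one of algebraic groups (this is the step where one must be slightly careful, since we need compatibility of the group structures, not just an abstract bijection). Next I would note that under this isomorphism the decomposition $\End_{\CA_t}(M) = C(\End_{\CA_t}(M)) \oplus J(\End_{\CA_t}(M))$ into a maximal semisimple subalgebra and its Jacobson radical is intrinsic, and that $1_M + J(\End_{\CA_t}(M))$ is visibly a closed normal unipotent subgroup (each element $1_M + \phi$ with $\phi$ nilpotent is unipotent, and conjugation by a unit preserves $J$). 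The quotient by $N$ is isomorphic to the unit group $U(C(\End_{\CA_t}(M))) \cong \prod_{i=1}^m \GL_{n_i}(K)$, which is reductive.

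The one point requiring genuine verification is that $N$ is the \emph{full} unipotent radical of $C_P(x)$ and that $\prod_{i=1}^m \GL_{n_i}(K)$ is a Levi complement, i.e.\ that this semidirect product really is a Levi decomposition in the sense of algebraic groups. For this I would observe that $J(\End_{\CA_t}(M))$ is a nilpotent (associative) algebra, so $N$ has a normal series with successive quotients $J^r/J^{r+1}$ on which the group operation reduces to addition; choosing a $K$-basis adapted to this filtration exhibits $N$ as split unipotent with quotients isomorphic to the additive group of $K$, and its dimension is $\dim_K J(\End_{\CA_t}(M)) = \delta$ by \eqref{e:deltaP}. Since $N$ is a connected normal unipotent subgroup with reductive quotient $\prod_i \GL_{n_i}(K)$, it is precisely the unipotent radical, and the displayed semidirect product is the Levi decomposition. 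The main obstacle, such as it is, lies in justifying the algebraic-group isomorphism $C_P(x) \cong \Aut_{\CA_t}(M)$ cleanly enough that the radical and Levi factor correspond; the internal module-theoretic structure having already been worked out, the remainder is essentially bookkeeping.
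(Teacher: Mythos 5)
Your proposal is correct and follows essentially the same route as the paper: invoking the isomorphism $C_P(x) \cong \Aut_{\CA_t}(M)$ and reading off the Levi decomposition from the splitting $\End_{\CA_t}(M) = C(\End_{\CA_t}(M)) \oplus J(\End_{\CA_t}(M))$, with $\prod_{i=1}^m \GL_{n_i}(K)$ arising as the units of the semisimple complement and $N = 1_M + J(\End_{\CA_t}(M))$ as the unipotent radical of dimension $\delta$. The extra care you take (the isomorphism being one of algebraic groups, and the $J$-adic filtration exhibiting $N$ as split unipotent) only makes explicit details the paper leaves implicit.
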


\begin{rem}
It is natural to ask whether Proposition \ref{prop:C_P} holds 
without the restriction $t \le 5$.  The arguments above do apply for 
$t > 5$ if $K$ is assumed to be algebraically closed.  It would be 
interesting to know what happens in general, and also if Corollary 
\ref{cor:order} holds for $t > 5$.
\end{rem}

We now wish to give the structure of the centralizer $C_U(x)$.  By a
further extension of the arguments in \cite[\S 4]{hilleroehrle}, one
sees that there is an isomorphism
\[
C_U(x) \cong 1_M + \End_{\CA_t}'(M),
\]
where
\[
\End_{\CA_t}'(M) := \{\phi \in \End_{\CA_t}(M) \mid \phi M(l)
\subseteq M(l-1) \text{ for all $l$}\},
\]
here we are identifying $M(l-1)$ with its image in $M(l)$ under
$M(\alpha_{l-1})$. We have that $\End_{\CA_t}'(M)$ is a nilpotent
ideal of $\End_{\CA_t}(M)$. We define
\[
\Hom_{\CA_t}'(I_i,I_j) := \{\phi \in \Hom_{\CA_t}(I_i,I_j) \mid \phi
I_i(l) \subseteq I_j(l-1) \text{ for all $l$}\}.
\]
Then we have the isomorphism
\[
\End_{\CA_t}'(M) \cong \bigoplus_{i,j=1}^m n_i n_j
\Hom_{\CA_t}'(I_i,I_j).
\]
We write
\begin{equation} \label{e:deltaU}
\delta' := \dim \End_{\CA_t}'(M) = \sum_{i,j=1}^m n_i n_j \dim
\Hom_{\CA_t}'(I_i,I_j).
\end{equation}

From the Auslander-Reiten quivers of $\CA_t$ exhibited in
\cite[p221--222]{dlabringel}, one can compute the dimensions $\dim
\Hom_{\CA_t}'(I_i,I_j)$.  These integers are independent of $K$, so
that $\delta'$ is also independent of $K$.  The above discussion
proves the following proposition, which concludes this section.

\begin{prop} \label{prop:C_U}
The centralizer $C_U(x)$ is a split unipotent group over $K$ of
dimension $\delta'$.
\end{prop}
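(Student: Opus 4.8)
The plan is to establish the two claimed assertions separately: first that $C_U(x)$ is isomorphic to $1_M + \End_{\CA_t}'(M)$ (so that its structure is controlled by $\End_{\CA_t}'(M)$), and then that this group is split unipotent of dimension $\delta'$. For the first part, I would mirror the argument already sketched for $C_P(x) \cong \Aut_{\CA_t}(M)$ in Proposition \ref{prop:C_P}. Recall that an element $g \in P(\d)$ centralizes $x$ precisely when it commutes with $x$ as a linear map; under the module--orbit dictionary of \cite[\S 4]{hilleroehrle} this corresponds to an $\CA_t$-module endomorphism of $M$, and the invertible such endomorphisms give $C_P(x) \cong \Aut_{\CA_t}(M)$. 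The key point is to identify the extra condition that cuts $U$ out of $P$: an element $g \in U(\d)$ is characterized by $(g-1)V_i \subseteq V_{i-1}$ for all $i$. Translating this through the correspondence, writing $g = 1_M + \phi$ with $\phi \in \End_{\CA_t}(M)$, the unipotency condition becomes exactly $\phi M(l) \subseteq M(l-1)$ for all $l$ (identifying $M(l-1)$ with its image under $M(\alpha_{l-1})$), which is precisely the defining condition of $\End_{\CA_t}'(M)$. This yields the isomorphism $C_U(x) \cong 1_M + \End_{\CA_t}'(M)$.

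Next I would verify that $\End_{\CA_t}'(M)$ is a nilpotent ideal of $\End_{\CA_t}(M)$, so that $1_M + \End_{\CA_t}'(M)$ is genuinely a unipotent group. That it is a two-sided ideal follows from the observation that the condition $\phi M(l) \subseteq M(l-1)$ is preserved under pre- and post-composition with arbitrary module endomorphisms, since any $\psi \in \End_{\CA_t}(M)$ respects the filtration $\cdots \subseteq M(l-1) \subseteq M(l) \subseteq \cdots$. Nilpotency follows because each $\phi \in \End_{\CA_t}'(M)$ strictly lowers the filtration index, so a sufficiently high power annihilates $M$; concretely, $\phi^{t}$ sends $M(l)$ into $M(l-t) = 0$. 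This makes $1_M + \End_{\CA_t}'(M)$ a group under multiplication, with inverses given by the finite geometric series $1_M - \phi + \phi^2 - \cdots$.

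For the split unipotent structure and the dimension count, I would use the direct sum decomposition \eqref{e:decomp} to obtain $\End_{\CA_t}'(M) \cong \bigoplus_{i,j} n_i n_j \Hom_{\CA_t}'(I_i,I_j)$, exactly as for $\End_{\CA_t}(M)$. The splitness then follows from the standard fact that for any nilpotent associative $K$-algebra $A$ (here $A = \End_{\CA_t}'(M)$), the group $1 + A$ admits a central series with successive quotients isomorphic to $(A^k/A^{k+1}, +)$, which are vector groups over $K$, hence direct sums of copies of the additive group $K$; refining this series gives a normal series with all quotients isomorphic to $K$. The dimension of $1_M + \End_{\CA_t}'(M)$ as a group (equivalently, the number of copies of $K$) equals $\dim_K \End_{\CA_t}'(M) = \delta'$, which is the definition \eqref{e:deltaU}. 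Since the dimensions $\dim \Hom_{\CA_t}'(I_i,I_j)$ are read off from the Auslander--Reiten quivers in \cite[p221--222]{dlabringel} and are independent of $K$, so is $\delta'$.

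The main obstacle I anticipate is not in the group-theoretic formalism, which is routine, but in carefully verifying the isomorphism $C_U(x) \cong 1_M + \End_{\CA_t}'(M)$ — specifically, confirming that the unipotent-radical condition $(g-1)V_i \subseteq V_{i-1}$ on the matrix side corresponds cleanly to the filtration-lowering condition on the module side under the identification of $M(l-1)$ with its image in $M(l)$. This requires unwinding the precise form of the correspondence of \cite[\S 4]{hilleroehrle}, where the flag $V_i$ and the module grading $M = \bigoplus M(i)$ must be matched up correctly; the bookkeeping of which index shifts where is where errors are most likely to creep in, and it is exactly the step the authors flag as "a further extension of the arguments in \cite[\S 4]{hilleroehrle}."
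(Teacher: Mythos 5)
Your proposal is correct and follows essentially the same route as the paper: establish $C_U(x) \cong 1_M + \End_{\CA_t}'(M)$ by extending the module--orbit dictionary of \cite[\S 4]{hilleroehrle}, note that $\End_{\CA_t}'(M)$ is a nilpotent ideal, decompose it via \eqref{e:decomp} into the spaces $\Hom_{\CA_t}'(I_i,I_j)$, and read off the dimension $\delta'$ and its independence of $K$ from the Auslander--Reiten quivers. You in fact supply more detail than the paper does at the two points it leaves implicit (the translation of the condition $(g-1)V_i \subseteq V_{i-1}$ into the filtration-lowering condition, and the central series $1+A \supseteq 1+A^2 \supseteq \cdots$ giving splitness), and those details are accurate.
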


\section{Proof of Theorem \ref{thm:main}}
\label{sec:proof}

Let $q$ be a prime power and let $K = \FF_q$ be the field of $q$
elements.  Let $t \le 5$ and let $\d \in \ZZ_{\ge 0}^t$.  Let $P =
P(\d)$, $U = U(\d)$ and $\u = \u(\d)$ be as in the previous section,
so $P$ is a parabolic subgroup of $\GL_n(q)$.

The following corollary of Propositions \ref{prop:C_P} and
\ref{prop:C_U} is a key step in our proof of Theorem \ref{thm:main}.
It follows immediately from Propositions \ref{prop:C_P} and
\ref{prop:C_U} along with the elementary fact that the order of a
general linear group over $\FF_q$ is given by a polynomial in $q$.
The positive integers in the statement are determined in
\eqref{e:decomp}, \eqref{e:deltaP} and \eqref{e:deltaU}.

\begin{cor}
\label{cor:order} Let $x \in \u$.  Then there are positive integers
$n_1,\dots,n_m$, $\delta$ and $\delta'$ independent of $q$ such that
$$
|C_P(x)| = \prod_{i=1}^m |\GL_{n_i}(q)| \cdot q^{\delta},
$$
and
$$
|C_U(x)| = q^{\delta'}.
$$
In particular, both $|C_P(x)|$ and $|C_U(x)|$ are polynomials in $q$
with integer coefficients.
\end{cor}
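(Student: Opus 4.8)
The plan is to combine the structural descriptions in Propositions \ref{prop:C_P} and \ref{prop:C_U} with two elementary facts about orders of groups over $\FF_q$. Since all of the genuine group theory has already been carried out in Section \ref{sec:translation}, the argument will be short and essentially bookkeeping.

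First I would compute the order of a split unipotent group over $\FF_q$. By definition, the unipotent radical $N$ appearing in Proposition \ref{prop:C_P} admits a normal series $1 = N_0 \triangleleft N_1 \triangleleft \cdots \triangleleft N_{\delta} = N$ in which every quotient $N_j/N_{j-1}$ is isomorphic to the additive group $\FF_q$, and $|\FF_q| = q$. An immediate induction on the length of this series then gives $|N| = q^{\delta}$. Applying the same reasoning to $C_U(x)$, which by Proposition \ref{prop:C_U} is split unipotent of dimension $\delta'$, yields $|C_U(x)| = q^{\delta'}$, establishing the second displayed formula.

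Next I would assemble the order of $C_P(x)$ from its Levi decomposition $C_P(x) \cong \prod_{i=1}^m \GL_{n_i}(\FF_q) \ltimes N$ supplied by Proposition \ref{prop:C_P}. As the order of a semidirect product is the product of the orders of its factors, this gives $|C_P(x)| = \prod_{i=1}^m |\GL_{n_i}(q)| \cdot q^{\delta}$. To finish, I would invoke the standard formula $|\GL_n(q)| = \prod_{k=0}^{n-1} (q^n - q^k)$, which is visibly a polynomial in $q$ with integer coefficients. Because the integers $n_1, \dots, n_m$, $\delta$ and $\delta'$ are independent of $q$, as recorded in Section \ref{sec:translation}, both expressions are fixed polynomials in $q$ with integer coefficients, which is the final claim. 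I do not anticipate any real obstacle here; the only step meriting a moment's care is unwinding the normal-series definition to obtain $|N| = q^{\delta}$, and even that is routine once one notes that each factor group contributes a factor of $q$.
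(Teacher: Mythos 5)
Your proposal is correct and follows essentially the same route as the paper, which derives the corollary immediately from Propositions \ref{prop:C_P} and \ref{prop:C_U} together with the elementary fact that $|\GL_n(q)|$ is a polynomial in $q$; you have merely filled in the routine details (the induction along the normal series giving $|N| = q^{\delta}$, and the multiplicativity of orders in a semidirect product) that the paper leaves implicit.
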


We are now in a position to prove Theorem \ref{thm:main}.

\begin{proof}[Proof of Theorem \ref{thm:main}]
We have to prove that $k(U)$ is given by a polynomial in $q$.  As
discussed in the previous section $k(U) = k(U,\u)$, the number of
adjoint $U$-orbits in $\u$.  We will prove that $k(U,\u)$ is a
polynomial in $q$ with integer coefficients.

We may choose a set of representatives $\CR$ of the adjoint
$P$-orbits in $\u$, as in Lemma \ref{L:reps} and consider $\CR$ to
be independent of $q$. We have
\[
k(U,\u) = \sum_{x \in \CR} k(U, P\cdot x),
\]
where $k(U, P\cdot x)$ is the number of $U$-orbits contained in
$P\cdot x$. For $x \in \u$ and $g \in P$, we have $C_U(g \cdot x) =
gC_U(x)g^{-1}$. Therefore, we get $|U\cdot x| = |U \cdot (g \cdot
x)|$ and $k(U,P \cdot x) = |P \cdot x|/|U \cdot x|$. It follows that
\[
k(U,\u) = \sum_{x \in \CR} k(U, P \cdot x)
     = \sum_{x \in \CR} \frac{|P\cdot x|}{|U\cdot x|}
        =  \frac{|P|}{|U|}\sum_{x \in \CR} \frac{|C_U(x)|}{|C_P(x)|}\\
      = |L| \sum_{x \in \CR} \frac{|C_U(x)|}{|C_P(x)|},\\
\]
where $L$ is a Levi subgroup of $P$. Since $|L|$ is a polynomial in
$q$, it follows from Corollary \ref{cor:order} and the fact that
$\CR$ is independent of $q$ that $k(U,\u) = k(U)$ is a rational function in
$q$.  Since $k(U)$ takes integer values for all prime powers,
standard arguments show that $k(U)$ is in fact a polynomial in $q$
with rational coefficients, see for example \cite[Lem.\
2.11]{goodwinroehrle:unipotent}.

Let $\bfP$ be the subgroup of $\GL_n(\bar \FF_q)$ corresponding to
$P$ and let $\bfU$ be the unipotent radical of $\bfP$.  The
\emph{commuting variety of $\bfU$} is the closed subvariety of $\bfU
\times \bfU$ defined by
\[
\CC(\bfU) = \{(u,u') \in \bfU \times \bfU \mid uu' = u'u \}.
\]
Setting $\CC(U) = \CC(\bfU) \cap (U \times U)$ and using the
Burnside counting formula we get
\begin{equation*}
|\CC(U)| = \sum_{x\in U}|C_U(x)| = |U|\cdot k(U).
\end{equation*}
Since $|U| =q^{\dim \bfU}$ and $k(U)$ is a polynomial in $q$ with
rational coefficients, so is $|\CC(U)|$. Now using the Grothendieck
trace formula applied to $\CC(\bfU)$ (see \cite[Thm.\
10.4]{dignemichel}), standard arguments prove that the coefficients
of this polynomial are integers, see for example \cite[Prop.\
6.1]{Re}. Thus, it follows that $k(U)$ is a polynomial function in
$q$ with integer coefficients, as claimed.
\end{proof}

\begin{rem}
Let $t \le 5$ and $\d,\d' \in \ZZ_{\ge 0}^t$ with $d_t = d'_t = n$.
Suppose that $P = P(\d)$ and $Q = P(\d')$ are \emph{associated parabolic
subgroups} of $\GL_n(\FF_q )$, i.e.\ $P$ and $Q$ have Levi subgroups that
are conjugate in $\GL_n(q)$. This means that there is a $\sigma \in S_n$
such that $d_i - d_{i-1} = d'_{\sigma (i)} - d'_{\sigma (i) - 1}$ for all $i
= 1,\dots,t$, with the convention that $d_0 = d'_0 = 0$. Let $U =
U(\d)$ and $V = U(\d')$.  A consequence of
\cite[Cor.~4.7]{hilleroehrle} is that the number $k(P,U)$ of
$P$-conjugacy classes in $U$ is the same as $k(Q,V)$; the reader is
referred to \cite[Cor.\ 4.8]{goodwinroehrle:unipotent} for
similar phenomena. However, it
is not always the case that the number of conjugacy classes of $U$
is the same as the number of conjugacy classes of $V$.  For example,
take $t=3$ and consider the dimension vectors $\d = (2,3,4)$ and
$\d' = (1,3,4)$. Then $P(\d)$ and $P(\d')$ are associated parabolic
subgroups of $\GL_4(q)$.
Let $U = U(\d)$ and $V = U(\d')$. Then by direct
calculation one can check that
\begin{align*}
k(U) &= (q-1)^3+6(q-1)^2+5(q-1)+1 \\
 &\ne (q-1)^4+4(q-1)^3+6(q-1)^2+5(q-1)+1 = k(V).
\end{align*}
\end{rem}

\bigskip

{\bf Acknowledgments}: This research was funded in part by EPSRC
grant EP/D502381/1.

%%%%%%%%%%%%%%%%%%%%%%%%%%%%%%%%%%%%%%%%%%%%%%%%%%%%%%%%%%%%%%%%%%%%%%
%%%%%%%%%%%%% bibliography
%%%%%%%%%%%%%%%%%%%%%%%%%%%%%%%%%%%%%%%%%%%%%%%%%%%%%%%%%%%%%%%%%%%%%%
\bigskip

\end{document}